\newtheorem{lemma}{Lemma}[section]
\newtheorem{theorem}{Theorem}[section]
\newtheorem{corollary}{Corollary}[section]
\newtheorem{remark}{Remark}[section]
\newtheorem{definition}{Definition}[section]
\newtheorem{example}{Example}[section]
\newtheorem{assumption}{Assumption}[section]
\def\rrr#1\\{\par
\medskip\hbox{\vbox{\parindent=2em\hsize=6.12in
\hangindent=4em\hangafter=1#1}}}
\begin{document}

\title{\huge Semistability of Nonlinear Systems Having a Continuum of Equilibria and Time-Varying Delays}

\author{Qing~Hui\\
{\small{Control Science and Engineering Laboratory\\
Department of Mechanical Engineering \\
Texas Tech University \\
Lubbock, TX 79409-1021\\
Technical Report CSEL-01-10, January 2010}}
\thanks{This work was supported by the Defense Threat Reduction Agency, Basic Research Award \#HDTRA1-10-1-0090, to Texas Tech University.}
}

\markboth{Technical Report CSEL-01-10}{}

\maketitle
\IEEEpeerreviewmaketitle
\pagestyle{empty}
\thispagestyle{empty}
\baselineskip 22pt


\begin{abstract}
In this report, we develop a semistability analysis framework for nonlinear systems with bounded time-varying delays with applications to stability analysis of multiagent dynamic networks with consensus protocols in the presence of unknown heterogeneous time-varying delays along the communication links. We show that for such a nonlinear system having nonisolated equilibria, if the system asymptotically converges to a constant time-delay system and this new system is semistable, then the original time-varying delay system is semistable. In proving our results, we extend the limiting differential equation approach to time-varying delay systems and also develop some new convergence results for functional differential equations.
\end{abstract}


\section{Introduction}

Delays are unavoidable in communication, where information has to be transmitted over a physical distance. Unfortunately, very little research has been done to investigate the effect of delays on stability of consensus of multiagent networks. To accurately describe
the evolution of networked cooperative systems, it is necessary to include in
any mathematical model of the system dynamics some information about
the past system states. In this case, the state of the system at a
given time involves a portion of trajectories in the space of
continuous functions defined on an interval of the state space, which leads to (infinite-dimensional) delay dynamical systems
\cite{HV:1993}.

Previously, most of the reported work has either explicitly or implicitly employed the assumption that delays are known and continuously differentiable. Under such an assumption, one can use the delayed state of an agent in its own local control law to match the delays of the states from the neighboring agents \cite{BT:CDC:2005,LJL:SCL:2008,SM:TAC:2004,SWX:CDC:2006}, i.e. agent $i$ can use a delayed version of its own state, $x_i(t-\tau_{ij}(t))$. Under that assumption, the control law is
\begin{eqnarray}\label{eq:cl}
u_i = \sum_{j \in \mathcal{N}_i} a_{ij}(x_j(t-\tau_{ij})-x_i(t-\tau_{ij})),
\end{eqnarray} where $\mathcal{N}_{i}$ denotes the set of all other agents having a communication with agent $i$ and $a_{ij}\in\mathbb{R}$.
If the delays are constant and uniform, $\tau_{ij} = \tau$ for all $ij$, then the network dynamics are of the form of time-delayed linear systems with the system matrix being the Laplacian, $\dot x = Lx(t-\tau)$, for which various analysis tools for linear systems with delays can be applied \cite{LJL:SCL:2008,SM:TAC:2004,SWX:CDC:2006}. Additionally, the control law in (\ref{eq:cl}) allows one to utilize disagreement dynamics, in which the disagreement $x_j(t-\tau_{ij})-x_i(t-\tau_{ij})$ is the delayed version of the disagreement $x_j(t)-x_i(t)$. Because of the preceding property, one can study the behavior of the networks using disagreement dynamics or reduced disagreement dynamics in a similar fashion to the case without delays (the reduced disagreement dynamics are asymptotically stable). However, if the delays are unknown, time-varying, and not uniform over the communication links, the assumption that agent $i$ has access to the delayed state $x_i(t-\tau_{ij}(t))$ raises a practical concern. If agent $i$ does not have $x_i(t-\tau_{ij}(t))$ to use in the control protocol (in which case we say that the delays are asymmetric), the control law actually becomes
\begin{eqnarray}\label{eq:cl2}
u_i = \sum_{j \in \mathcal{N}_i} a_{ij}(x_j(t-\tau_{ij})-x_i(t)).
\end{eqnarray}
Because $x_j(t-\tau_{ij})-x_i(t)$ is no longer the delayed version of the disagreement $x_j(t)-x_i(t)$, the derivatives of the disagreements are not functions of the disagreements only), and hence, the approaches in \cite{LJL:SCL:2008,SM:TAC:2004,SWX:CDC:2006} are not applicable to networks with the protocol (\ref{eq:cl2}).
Stability of dynamic networks in such a situation has only recently been addressed \cite{CHHR:SCL:2008,PJ:CDC:2006,TL:TAC:2008}, most of which are limited to the case of constant time delays. In particular, the authors in \cite{CHHR:SCL:2008} have shown that dynamic networks with consensus protocols in the presence of heterogeneous delays are stable for arbitrary constant delays. Another closely related work is \cite{XW:TAC:2008}, where the authors consider networks with different arrival times for communication and with zero-order hold control laws, which leads to discrete-time dynamic networks formulation without time-delays for the overall closed loop. Left open is the problem of stability and convergence of time-varying consensus dynamic networks in the presence of unknown asymmetric non-uniform time-varying delays, which turns out to be a consequence of the more general results in this report.

In this report, we develop a general framework for semistability analysis of nonlinear systems having nonisolated equilibria and bounded time-varying delays in which the delays are unknown and continuous with respect to time, not necessarily continuously differentiable. Here semistability is the property
whereby every trajectory that starts in a neighborhood of a Lyapunov
stable equilibrium converges to a (possibly different) Lyapunov
stable equilibrium. The basic assumption for the main result in this report involves the idea of \textit{limiting equations} \cite{Artstein:JDE:1977} by assuming that the original time-varying delay system asymptotically converges to an autonomous system with constant delays. Using these results, next we present stability analysis of time-varying consensus dynamic networks in the presence of unknown asymmetric non-uniform time-varying delays. The main feature of the proposed framework is that the assumption on continuous differentiability of time delays is considerably weakened by use of a limiting function assumption, which is more natural and useful in many systems, particularly for dynamical systems with \textit{autonomous self-regulating time lags} \cite{BC:PNAS:1959,Cooke:JDE:1970,Rek:DAN:1958,Ginzburg:DU:1970,Ruiz:JDE:1976,Stephan:JDE:1969,Stephan:JDE:1970}.  

\section{Mathematical Preliminaries}\label{TDLMI}

Let $\mathbb{R}^{n}$ denote the real Euclidean space of $n$-dimensional column vectors and let $|x|$ denote the norm of the vector $x$ in $\mathbb{R}^{n}$. Let $r\geq 0$ be a constant and let $\mathcal{C}([-r,0],\mathbb{R}^{n})$ denote the space of continuous functions that map the interval $[-r,0]$ into $\mathbb{R}^{n}$ with the topology of uniform convergence and designate the norm of an element $\phi\in\mathcal{C}$ by $\|\phi\|=\sup_{-r\leq\theta\leq0}|\phi(\theta)|$. Let $\rho\geq0$ and $x\in\mathcal{C}([-r,\rho],\mathbb{R}^{n})$. Then for any $t\in[0,\rho]$, define $x_{t}\in\mathcal{C}$ by $x(t+s)$, $s\in[-r,0]$.

Consider the nonlinear dynamical system with bounded time-varying delays given by the \textit{differential difference equation} \cite{BC:1963}
\begin{eqnarray}\label{DS}
\dot{x}(t)=f(x(t))+g(x(t-\tau_{1}(t)),\ldots,x(t-\tau_{m}(t))),
\end{eqnarray} where $x(t)\in\mathbb{R}^{n}$, $f:\mathbb{R}^{n}\to\mathbb{R}^{n}$ is globally Lipschitz continuous, $g:\mathbb{R}^{n}\times\cdots\times\mathbb{R}^{n}\to\mathbb{R}^{n}$ is globally Lipschitz continuous, and $\tau_{k}:\mathbb{R}\to\mathbb{R}$ is continuous but \textit{not} necessarily differentiable, $k=1,\ldots,m$. Throughout this report, we make the following standing assumptions on (\ref{DS}).

\begin{assumption}\label{A2}
For every $k=1,\ldots,m$ and $t\in\mathbb{R}$, $0\leq\tau_{k}(t)\leq h$, where $h>0$ is a constant.
\end{assumption}

\begin{assumption}\label{A1}
The equilibrium set $\mathcal{E}:=\{x_{t}\in\mathcal{C}=\mathcal{C}([-h,\infty),\mathbb{R}^{n}):x(t+s)\equiv\alpha\in\mathbb{R}^{n},\,\,{\rm{for}}\,\,{\rm{all}}\,\,t\geq0,\,\,{\rm{for}}\,\,{\rm{all}} \,\,s\in[-h,0],\,\,{\rm{and}}\,\,f(\alpha)+g(\alpha,\ldots,\alpha)=0\}$ is connected.
\end{assumption}

Recall that a set $\mathcal{E}\subseteq\mathcal{C}$ is \textit{connected} if every pair of open sets $\mathcal{U}_{i}\subseteq\mathcal{C}$, $i=1,2$, satisfying $\mathcal{E}\subseteq\mathcal{U}_{1}\cup\mathcal{U}_{2}$ and $\mathcal{U}_{i}\cap\mathcal{E}\neq\varnothing$, $i=1,2$, has a nonempty intersection. Assumption~\ref{A2} implies that the differential difference equation (\ref{DS}) has bounded time-varying delays. Assumption~\ref{A1} implies that the equilibria of (\ref{DS}) are \textit{nonisolated} equilibrium points. This situation occurs in many practical problems such as compartmental modeling of biological systems \cite{HCH:2010}, thermodynamic systems \cite{HCN:2005}, multiagent coordinated networks \cite{CHHR:SCL:2008,SM:TAC:2004,VHM:ACC:2010}, and synchronization of coupled oscillators \cite{PJ:CDC:2006}. 

\begin{example}
Consider a special case of (\ref{DS}) where $f(x)=Ex$, $g(x,\ldots,x)=\sum_{k=1}^{m}F_{k}x$, and $E,F_{k}\in\mathbb{R}^{n\times n}$ are matrices, $k=1,\ldots,m$. If $E+\sum_{k=1}^{m}F_{k}$ is singular, then $\mathcal{E}$ is a connected set, i.e., (\ref{DS}) has nonisolated equilibria. A relevant example for this case is the consensus problem with time delays \cite{CHHR:SCL:2008,SM:TAC:2004,VHM:ACC:2010} given by the consensus protocol
\begin{eqnarray}\label{eq:sys1}
\dot{x}(t)=Ex(t)+\sum_{k=1}^{m}F_{k}x(t-\tau_{k}(t)),
\end{eqnarray}
where $E+\sum_{k=1}^{m}F_{k}$ is a Laplacian. 
\end{example}

Given $\phi\in\mathcal{C}$ and $\tau>0$, a function $x(\phi)(\cdot)$ is called a \textit{solution} to (\ref{DS}) on $[-h,\tau)$ with initial condition $\phi$ if $x\in\mathcal{C}([-h,\tau),\mathbb{R}^{n})$, $x_{t}\in\mathcal{C}$, $x(t)$ satisfies (\ref{DS})
for every $t\in[0,\tau)$, and $x(\phi)(0)=\phi$ \cite{HV:1993}. We use the short notation $x_{t}(\phi)$ for $x(\phi)(t)$. Recall that a point $y\in\mathcal{C}$ is a \textit{positive limit point} of a solution $x_{t}(\phi)$ to (\ref{DS}) with $x_{0}(\phi)=\phi$, if there exists a nonnegative sequence $\{t_{n}\}_{n=1}^{\infty}$ with $t_{n}\to+\infty$ as $n\to\infty$ such that $\lim_{n\to\infty}x_{t_{n}}(\phi)=y$. The set of all such positive limit points, denoted by $\omega(\phi)$, is called the \textit{positive limit set} of $x_{0}(\phi)=\phi\in\mathcal{C}$ \cite[p.~102]{HV:1993}. For the notions of \textit{bounded solutions} and \textit{invariant sets}, see Definition 1.2 in \cite[p.~131]{HV:1993} and Definition 2.2 in \cite[p.~104]{HV:1993}, respectively. Finally, recall that the equilibrium solution $x_{t}(\phi)\equiv x_{\rm{e}}$ of (\ref{DS}) is \textit{Lyapunov stable} relative to $\mathcal{D}$ if for every $\varepsilon>0$, there exists $\delta=\delta(\varepsilon)>0$ such that $\phi\in\mathcal{B}_{\delta}(x_{\rm{e}})\cap\mathcal{D}$ implies $x_{t}(\phi)\in\mathcal{B}_{\varepsilon}(x_{\rm{e}})\cap\mathcal{D}$ for all $t\geq0$, where $\mathcal{B}_r(s)$ denotes the open ball centered at $s$ with radius $r$. Motivated by Proposition 3.1 of \cite{HHB:TAC:2009} and Lemma 3.1 of \cite{HKTW:JDE:1994}, we have the following result.

\begin{lemma}\label{omega}
Let $\mathcal{D}\subseteq\mathcal{C}$ be invariant with respect to (\ref{DS}). Assume that the solutions of (\ref{DS}) are bounded and let $x_{t}(\phi)$ be a
solution of (\ref{DS}) with $x_{0}(\phi)=\phi\in\mathcal{D}$. If
$z\in\omega(\phi)$ is a Lyapunov stable equilibrium solution to (\ref{DS}) relative to $\mathcal{D}$, then
$z=\lim_{t\to\infty}x_{t}(\phi)$ and $\omega(\phi)=\{z\}$.
\end{lemma}

\begin{proof} Since the solutions of (\ref{DS}) are bounded, it follows from Lemma 1.3 of \cite[p.~103]{HV:1993} that $\omega(\phi)$ is nonempty. Next, note that it follows from \cite[p.~101]{HV:1993} that the solution to (\ref{DS}) is the \textit{process generated by the retarded functional differential equation} (\ref{DS}). Now the proof of the result is similar to the proofs
of Proposition 3.1 of \cite{HHB:TAC:2009} and Lemma 3.1 of \cite{HKTW:JDE:1994}. 
\end{proof}

\begin{definition}
Let $\mathcal{D}\subseteq\mathcal{C}$ be invariant with respect to (\ref{DS}). An equilibrium solution $x_{t}(\phi)\equiv x_{\rm{e}}\in\mathcal{E}\cap\mathcal{D}$ of (\ref{DS}) is \textit{semistable} relative to $\mathcal{D}$ if there exists a set $\mathcal{U}\subseteq\mathcal{D}$ containing $x_{\rm{e}}$ such that $\mathcal{U}=\mathcal{S}\cap\mathcal{D}$ for some open set $\mathcal{S}\subseteq\mathcal{C}$, every equilibrium solution $x_{t}(\phi)\equiv x_{\rm{e}}$ in $\mathcal{U}$ is Lyapunov stable relative to $\mathcal{D}$, and for every initial condition $\phi\in\mathcal{U}$, $\lim_{t\to\infty}x_{t}(\phi)$ exists. The system (\ref{DS}) is \textit{semistable} relative to $\mathcal{D}$ if for every $x_{\rm{e}}\in\mathcal{E}\cap\mathcal{D}$, $x_{t}(\phi)\equiv x_{\rm{e}}$ is semistable relative to $\mathcal{D}$.
\end{definition}

\section{Main Results}

\subsection{General Results for Nonlinear Time Delay Systems}

In this section, we propose a \textit{limiting delay system} approach to study the asymptotic behavior of (\ref{DS}). Specifically, it follows from (\ref{DS}) that
$\dot{x}(t)=f(x(t))+g(x(t-h_{1}),\ldots,x(t-h_{m}))+g(x(t-\tau_{1}(t)),\ldots,x(t-\tau_{m}(t)))-g(x(t-h_{1}),\ldots,x(t-h_{m}))$, where $0\leq h_{k}\leq h$, $k=1,\ldots,m$, are some constants that are \textit{not} necessarily known. Next, define
$\mathcal{X}(t):=g(x(t-\tau_{1}(t)),\ldots,x(t-\tau_{m}(t)))-g(x(t-h_{1}),\ldots,x(t-h_{m}))$. Then we have
$\dot{x}(t)=f(x(t))+g(x(t-h_{1}),\ldots,x(t-h_{m}))+\mathcal{X}(t)$.
Note that if $x_{t}(\phi)\equiv\alpha\in\mathcal{E}$, then $\mathcal{X}(t)=0$.

\begin{definition}\label{def}
Let $\mathcal{D}\subseteq\mathcal{C}$ be invariant with respect to (\ref{DS}). If for every initial condition $x_{0}(\phi)=\phi\in\mathcal{D}$, the solution $x_{t}(\phi)\in\mathcal{D}$ to (\ref{DS}) satisfies that there exists a sequence $\{t_{n}\}_{n=1}^{\infty}$ with $t_{n}\to+\infty$ as $n\to\infty$ such that $\lim_{n\to\infty}\mathcal{X}(t+t_{n})=0$ uniformly in $t$ on every compact subset of $[0,\infty)$,
then the system
\begin{eqnarray}\label{limiting}
\dot{z}(t)=f(z(t))+g(z(t-h_{1}),\ldots,z(t-h_{m}))
\end{eqnarray} with the initial condition $z_{0}(\phi)=\phi\in\mathcal{D}$ is called a \textit{limiting delay system} of (\ref{DS}) relative to $\mathcal{D}$. 
\end{definition}

\begin{remark}
Definition~\ref{def} can be generalized to the case where the limiting delay system (\ref{limiting}) is of the form (\ref{DS}), i.e., the time-delays are not necessarily constant. Many results developed in this report can be parallel extended to this case. However, in this case, one may not have some simple criteria to test semistability of the limiting delay system with time-varying delays. 
\end{remark}

The idea of the limiting delay system approach is inspired by the limiting equation approach originated from \cite{Artstein:JDE:1977} and being extended to various finite- and infinite-dimensional dynamical systems by changing the definition of limiting functions \cite{MK:PM:1987,LLC:TAC:2001,LR:JDE:2003,AP:MN:2000,Sedova:JMAA:2003}. Our definition extends this limiting equation approach to differential difference equations and gives a new definition of limiting systems for time-delay systems.

It is important to note that the proposed limiting delay system approach is closely related to the problem of the effect of delay on differential difference equations studied in \cite{BC:PNAS:1959,Cooke:JDE:1970,Rek:DAN:1958,Ginzburg:DU:1970,Ruiz:JDE:1976,Stephan:JDE:1969,Stephan:JDE:1970}. Specifically, for a linear differential difference equation given by $\dot{x}(t)=Ax(t)+Bx(t-\tau(t))$, it is of interest to determine the asymptotic behavior of the solutions if $\tau(t)$ is ``close" to a constant for large $t$. In this case, the Banach space $\mathcal{C}$ is too large in general (this is one of the reasons why $\mathcal{D}$ is used in Definition~\ref{def}; another reason is that for some practical systems such as \textit{nonnegative systems} \cite{HCH:2010} in which the state variables are always nonnegative, it would be more appropriate to discuss the dynamic behavior of such systems under these constraints). Cooke \cite{Cooke:JDE:1970} has shown that the \textit{Sobolev space} $W^{1,\infty}$ \cite{Evans:1998} can be effectively used to discuss this problem (see also \cite{BC:PNAS:1959}). The Sobolev space $W^{1,\infty}$ or $W^{1,p}$ for some $p\in(1,\infty)$ has been used for the existence of periodic solutions \cite{Ginzburg:DU:1970,Ruiz:JDE:1976,Stephan:JDE:1969,Stephan:JDE:1970}. However, to our best knowledge, there are no general \textit{nonlinear} results available for this problem. The results developed in this report can be viewed as the first attempt to addressing this problem for nonlinear systems with autonomous self-regulating time lags.

Note that the limiting delay system (\ref{limiting}) has the same equilibrium solutions as (\ref{DS}). Based on this new notion, we have the following convergence result. 

\begin{lemma}\label{lemma3}
Consider the nonlinear system (\ref{DS}). Let $\mathcal{D}\subseteq\mathcal{C}$ be invariant with respect to (\ref{DS}). Assume that the solutions of (\ref{DS}) are bounded. Furthermore, assume (\ref{limiting}) is a limiting delay system of (\ref{DS}) relative to $\mathcal{D}$. Then $\omega(\phi)$ is invariant with respect to (\ref{limiting}) for every initial condition $x_{0}(\phi)=\phi\in\mathcal{D}$.
\end{lemma}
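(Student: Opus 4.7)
The plan is to invoke a translation-limit argument for asymptotically autonomous systems, carried out on the phase space $\mathcal{C}$. Fix $\phi\in\mathcal{C}$ and $\psi\in\omega(\phi)$, interpreted (in the sense of Hale-Verduyn Lunel \cite[Ch.~4]{HV:1993}, implicit in the preliminaries) as an accumulation point in $\mathcal{C}$ of the orbit segments $\{x_t\}$: pick $t_n\to\infty$ with $x_{t_n}\to\psi$ in $\mathcal{C}$. Precompactness of the orbit, which follows from the boundedness hypothesis via the same Hale-Verduyn Lunel results used in the proof of Lemma~\ref{omega}, guarantees that such a sequence exists. Define the translated solutions $x^n(t):=x(t_n+t)$ for $t\geq -h$; by (\ref{TDLS}) they satisfy
\[
\dot{x}^n(t)=f(x^n(t))+g(x^n(t-h_1),\ldots,x^n(t-h_m))+\mathcal{X}(t_n+t),
\]
and by Definition~\ref{def} the forcing term $\mathcal{X}(t_n+\cdot)$ tends to $0$ uniformly on compact $t$-intervals.

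Next I would extract a uniformly convergent subsequence. Since the trajectory $x(\cdot)$ is bounded, its values lie in a fixed compact set of $\mathbb{R}^n$; combined with the hypothesis that $f$ and $g$ map closed bounded sets into bounded sets, this makes $\{\dot{x}^n\}$ uniformly bounded on any finite $t$-interval. Consequently $\{x^n\}$ is uniformly bounded and equicontinuous on every compact subinterval of $[-h,\infty)$. A diagonal Arzela-Ascoli argument then yields a subsequence (not relabeled) and a continuous function $z:[-h,\infty)\to\mathbb{R}^n$ with $x^n\to z$ uniformly on compacta. In particular $z_0=\lim_n x^n_0=\lim_n x_{t_n}=\psi$ in $\mathcal{C}$.

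Writing the perturbed equation in integral form,
\[
x^n(t)=x^n(0)+\int_0^t\!\bigl[f(x^n(s))+g(x^n(s-h_1),\ldots,x^n(s-h_m))+\mathcal{X}(t_n+s)\bigr]\,ds,
\]
I would pass to the limit $n\to\infty$ using uniform convergence on $[-h,T]$, continuity of $f$ and $g$ on bounded sets, and $\mathcal{X}(t_n+\cdot)\to 0$ uniformly on $[0,T]$. This identifies $z$ as a solution of the limiting equation (\ref{limiting}) with initial segment $z_0=\psi$. Finally, for each fixed $t\geq 0$,
\[
z_t=\lim_{n\to\infty}x^n_t=\lim_{n\to\infty}x_{t_n+t}\quad\text{in }\mathcal{C},
\]
and since $t_n+t\to\infty$ this exhibits $z_t$ as an element of $\omega(\phi)$, proving that $\omega(\phi)$ is positively invariant under (\ref{limiting}).

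The main obstacle is the reconciliation of the paper's $\mathbb{R}^n$-valued definition of $\omega(\phi)$ with the $\mathcal{C}$-valued notion that the invariance statement actually requires; this is resolved by the precompactness of the positive orbit in $\mathcal{C}$ cited above, which lets us promote pointwise limits $x(t_n)\to z$ to segment limits $x_{t_n}\to\psi$. A secondary technical point is the passage to the limit in the delayed arguments $x^n(s-h_k)$ when $s-h_k<0$, which is handled cleanly because uniform convergence of $x^n$ on $[-h,T]$ is exactly uniform convergence of the segments $x^n_s$ in $\mathcal{C}$ for $s\in[0,T]$; once that is observed, the argument reduces to the standard asymptotically autonomous-systems technique.
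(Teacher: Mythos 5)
Your proof is correct and follows the same overall strategy as the paper's: translate the solution along a sequence $t_n\to\infty$, observe that the translated solutions satisfy the limiting equation up to a perturbation $\mathcal{X}(t_n+\cdot)$ that vanishes uniformly on compacta, extract a subsequence converging uniformly on compact intervals via a diagonal argument, identify the limit as a solution of (\ref{limiting}), and conclude that its values remain in $\omega(\phi)$. The one technical divergence is in how the compactness/identification step is carried out: the paper recasts the perturbed equation as a differential inclusion $\dot{x}_n(t)\in\mathcal{B}_{f_n(t)}(f(x_n(t))+R(\mathfrak{D}x_n(t)))$ and invokes a sequential compactness theorem for trajectories of differential inclusions (Theorem 3.1.7 of \cite{Clarke:1983} or Lemma 4.5 of \cite{VP:JMAA:1982}), interval by interval, whereas you use Arzel\`a--Ascoli directly on the translated solutions and pass to the limit in the integral form of the equation. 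Your route is more elementary and self-contained; the paper's buys a ready-made statement at the cost of importing nonsmooth-analysis machinery and a measurability discussion for $f_n$ that your argument does not need. You are also more careful than the paper on one genuine point: the paper defines $\omega(\phi)$ as a set of limit points in $\mathbb{R}^n$ and takes only $x(t_n)\to z$, so the limit trajectory is pinned down by a single point $x^*(0)=z$ rather than an initial segment in $\mathcal{C}$; your use of precompactness of the orbit in $\mathcal{C}$ to upgrade $x(t_n)\to z$ to convergence of segments $x_{t_n}\to\psi$ is exactly what is needed for the invariance statement to make sense for a functional differential equation, and is the right way to close that gap.
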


\begin{proof}
Since the solutions of (\ref{DS}) are bounded, it follows from Lemma 1.3 of \cite[p.~103]{HV:1993} that the set $\omega(\phi)$ is nonempty. Let $z\in\omega(\phi)$ and hence, there exists a nonnegative sequence $\{t_{n}\}_{n=1}^{\infty}$ such that $t_{n}\to\infty$ and $x(\phi)(t_{n})\to z$ as $n\to\infty$. For each $n=1,2,\ldots$ and the solution $x(\cdot)$ to (\ref{DS}) with $x(s)=\phi(s)$, $s\in[-h,0]$, define the continuous functions $x_{n}(\tau):=x(\tau+t_{n})$ and $f_{n}(t):=|\mathcal{X}_{n}(t+t_{n})|+\frac{1}{n}$, $\tau\geq-h$, $t\geq 0$,
where
$\mathcal{X}_{n}(t+t_{n}):=g(x_{n}(t-\tau_{1}(t+t_{n})),\ldots,x_{n}(t-\tau_{m}(t+t_{n})))-g(x_{n}(t-h_{1}),\ldots,x_{n}(t-h_{m}))$.

Let $g(\mathfrak{D}x(t)):=g(x(t-h_{1}),\ldots,x(t-h_{m}))$, where $\mathfrak{D}$ denotes a multiple delay operator. Clearly the map $t\mapsto f_{n}(t)$ is continuous for every $n$.
By Definition~\ref{def}, note that for every $\varepsilon>0$, there exists an integer $N\geq 1$ such that for every $n>N$,
$0<f_{n}(t)<\varepsilon$ for all $t\geq 0$. Observe that $x_{n}(0)\to z(0)$ as $n\to\infty$.
Then it follows that for all $n > N$, we have
\begin{eqnarray}\label{eq:diffincl}
\dot{x}_{n}(t)\in\{f(x_{n}(t))\}+\overline{\mathcal{B}}_{f_{n}(t)}(g(\mathfrak{D}x_{n}(t)))\subset\{f(x_{n}(t))\}+\overline{\mathcal{B}}_{\varepsilon}(g(\mathfrak{D}x_{n}(t))),\,\,{\rm{a.e.}}\,\, t\geq 0,
\end{eqnarray} where $\overline{\mathcal{B}}_r(s)$ denotes the closed ball centered at $s$ with radius $r$ and ``a.e." denotes almost everywhere in the sense of Lebesgue measure. Note that (\ref{eq:diffincl}) represents a set-valued differential equation called \textit{differential inclusion} in the literature \cite{Clarke:1983,AC:1984}. In fact, (\ref{eq:diffincl}) is a \textit{delay differential inclusion} \cite{HP:PAMS:1995}.

Let $I_{j}$ denote the interval $[j,j+1]$, $j=0,1,2,\ldots$. It follows from (\ref{eq:diffincl}) and boundedness of $x(\cdot)$ that $|\dot{x}_{n}(t)|\leq M$ for almost all $t\geq0$ and some $M>0$. Hence, the functions $x_{n}(t)$ are uniformly bounded and equicontinuous. By the Arzel\`{a}-Ascoli theorem (see, e.g., \cite{Carothers:2000}) and Theorem 3.1.7 of \cite{Clarke:1983} or Lemma 4.5 of \cite{VP:JMAA:1982}, the sequence of restricted functions $\{x_{n}(t)\}$, $t\in I_{0}$, has a subsequence $\{x_{\sigma_{1}(n)}(t)\}$, $t\in I_{0}$, converging uniformly as $n \rightarrow \infty$ to an absolutely continuous function $x_{1}^{*}:I_{0}\to\mathbb{R}^{n}$,  satisfying (\ref{limiting}) almost everywhere and with the endpoint $x_{1}^{*}(0)=z(0)$. By repeating the same argument, the sequence $\{x_{\sigma_{1}(n)}(t)\}$, $t\in I_{1}$, contains a subsequence $\{x_{\sigma_{2}(n)}(t)\}$, $t\in I_{1}$, converging uniformly as $n \rightarrow \infty$ to an absolutely continuous function $x_{2}^{*}:I_{1}\to\mathbb{R}^{n}$, satisfying (\ref{limiting}) almost everywhere and with $x_{2}^{*}(1)=x_{1}^{*}(1)$. By induction, there exists a nested sequence $\{x_{\sigma_{\ell}(n)}\}_{\ell=1}^{\infty}$ of subsequences of $\{x_{n}\}$ such that for each $\ell=1,2,\ldots$, $\{x_{\sigma_{\ell}(n)}\}$ converges uniformly on $I_{\ell-1}=[\ell-1,\ell]$ to an absolutely continuous function $x_{\ell}^{*}: I_{\ell-1}\to\mathbb{R}^{n}$ that satisfies (\ref{limiting}) almost everywhere and with $x_{\ell+1}^{*}(\ell)=x_{\ell}^{*}(\ell)$ and $x_{1}^{*}(0)=z(0)$.

Define $x^{*}:[0,\infty)\to\mathbb{R}^{n}$ as
$x^{*}(t)=x_{\ell}^{*}(t)$ for $t\in I_{\ell-1}$ and $x^{*}(\theta)=z(\theta)$, $\theta\in[-h,0]$.
Then $x^{*}(\cdot)$ satisfies (\ref{limiting}) and $x^{*}(\theta)=z(\theta)$, $\theta\in[-h,0]$. Using the Cantor diagonal argument (see, e.g., \cite[p.210]{Carothers:2000}), it follows that there exists a subsequence $\{x_{\sigma_{n}(n)}\}$ of $\{x_{n}\}$ converging uniformly on every interval $[-h,\ell]$ to $x^{*}$ for all $\ell$. Since the sequence $\{t_{\sigma_{n}(n)}\}$ is such that $t_{\sigma_{n}(n)}\to\infty$ and $x(t+t_{\sigma_{n}(n)})=x_{\sigma_{n}(n)}(t)\to x^{*}(t)$ as $n\to\infty$, it follows that $x^{*}(t)\in\omega(\phi)$ for all $t\in[-h,\infty)$, which implies that $\omega(\phi)$ is invariant with respect to (\ref{limiting}).
\end{proof}

\begin{lemma}\label{convergent_eq}
Consider (\ref{limiting}). Let $\mathcal{D}\subseteq\mathcal{C}$ be invariant with respect to (\ref{limiting}). If the solutions of (\ref{limiting}) converge, that is, $\lim_{t\to\infty}z_{t}(\phi)$ exists for every $\phi\in\mathcal{D}$, then the function $\Omega:\mathcal{D}\to\mathcal{D}$ defined by $\Omega(\phi)=\lim_{t\to\infty}z_{t}(\phi)$, $\phi\in\mathcal{D}$, satisfies that for each $\phi\in\mathcal{D}$, $\Omega(\phi)$ is in the equilibrium set for (\ref{limiting}).
\end{lemma}

\begin{proof}
It follows from continuity of the solutions to (\ref{limiting}) that for every $s\geq 0$, $z_{s}(\Omega(\phi))=\lim_{t\to\infty}z_{t+s}(\phi)=\Omega(\phi)$. Thus, for every $\phi\in\mathcal{D}$, $\Omega(\phi)$ is in the equilibrium set for (\ref{limiting}).
\end{proof}

Now we have the main result for this report.

\begin{theorem}\label{thm1}
Consider the nonlinear system (\ref{DS}). Let $\mathcal{D}\subseteq\mathcal{C}$ be invariant with respect to (\ref{DS}). Assume (\ref{DS}) is Lyapunov stable relative to $\mathcal{D}$. Furthermore, assume (\ref{limiting}) is a limiting delay system of (\ref{DS}) relative to $\mathcal{D}$ and (\ref{limiting}) is semistable relative to $\mathcal{D}$. Then (\ref{DS}) is semistable relative to $\mathcal{D}$.
\end{theorem}

\begin{proof}
Since by assumption, (\ref{DS}) is Lyapunov stable relative to $\mathcal{D}$, it follows that the solutions of (\ref{DS}) are bounded. Then by Lemma~\ref{lemma3}, $\omega(\phi)$ is invariant with respect to (\ref{limiting}). Next, since by semistability, the solutions of (\ref{limiting}) converge, it follows that $\omega(\phi)$ contains positive limit points of (\ref{limiting}), and hence, $\omega(\phi)$ contains the positive limit set of (\ref{limiting}). Furthermore, it follows from Lemma~\ref{convergent_eq} that the positive limit set of (\ref{limiting}) contains an equilibrium solution of (\ref{limiting}). This equilibrium solution is also an equilibrium solution of (\ref{DS}) and by assumption, it is Lyapunov stable relative to $\mathcal{D}$. Hence, $\omega(\phi)$ contains a Lyapunov stable equilibrium solution for (\ref{DS}). Now it follows from Lemma~\ref{omega} that the solution of (\ref{DS}) converges to this Lyapunov stable equilibrium solution, which implies convergence of the solution of (\ref{DS}). By definition, (\ref{DS}) is semistable relative to $\mathcal{D}$.
\end{proof}

\begin{remark}
To discuss semistability of (\ref{DS}) using Theorem~\ref{thm1}, one has to know the information on Lyapunov stability of (\ref{DS}). Note that here we only assume $\tau_{k}(t)$ is \textit{continuous} for every $k=1,\ldots,m$. Hence, it is very difficult to use the Lyapunov-Krasovskii functional approach \cite{Kra:1963,HV:1993} to prove the Lyapunov stability of (\ref{DS}) since it requires the first-order derivative of $\tau_{k}(t)$. In this case, the Lyapunov stability of (\ref{DS}) may be verified using Razumikhin theorems via Lyapunov-Razumikhin functions \cite{Raz:PMM:1956,Raz:AT:1960,HV:1993}.
\end{remark}

\begin{example}\label{eg_scalar1}
Consider the scalar time-delay system given by
\begin{eqnarray}\label{scalar}
\dot{x}(t)=-x(t)+x(t-\tau(t)),
\end{eqnarray} where $x(t)\in\mathbb{R}$, $\tau(\cdot)$ is continuous, and $0\leq\tau(t)\leq h$ for all $t\in\mathbb{R}$. Consider the Lyapunov-Razumikhin function given by $V(x)=(x-\alpha)^{2}/2$, where $\alpha$ is an arbitrary constant. Then it follows from Theorem 4.1 of Chapter 5 of \cite{HV:1993} that (\ref{scalar}) is Lyapunov stable relative to $\mathcal{C}$. See \cite[p.~154]{HV:1993} for a detailed proof. 
\end{example}

\begin{remark}\label{SWLS}
Suppose the solutions of (\ref{DS}) are bounded. If $|f(x)|\leq\beta(|x|)$, $\beta(\cdot)$ is a class $\mathcal{K}$ function, and $\lim_{t\to\infty}\tau_{k}(t)=h_{k}$ for every $k=1,\ldots,m$, then (\ref{limiting}) is a limiting delay system of (\ref{DS}) relative to $\mathcal{C}$. Note that the assumption $\lim_{t\to\infty}\tau_{k}(t)=h_{k}$ implies that there exists $h>0$ such that $0\leq\tau_{k}(t)\leq h$ for every $t\geq0$ and every $k=1,\ldots,m$. Furthermore, it implies that the time-delays of (\ref{DS}) are autonomous self-regulating time lags \cite{BC:PNAS:1959,Cooke:JDE:1970,Rek:DAN:1958,Ginzburg:DU:1970,Ruiz:JDE:1976,Stephan:JDE:1969,Stephan:JDE:1970}. To show that (\ref{limiting}) is a limiting delay system of (\ref{DS}) relative to $\mathcal{C}$, let $|x(t)|\leq M$, $t\in[-h,\infty)$. Then from (\ref{DS}),
$|\dot{x}(t)|\leq\beta(M)+mLM+|g(0,\ldots,0)|:=K$, where $L$ is the Lipschitz constant for $g$.
Because
$x(t-\tau_{k}(t))-x(t-h_{k})=\int_{t-h_{k}}^{t-\tau_{k}(t)}\dot{x}(s)ds$,
it follows that
$|x(t-\tau_{k}(t))-x(t-h_{k})|\leq K|\tau_{k}(t)-h_{k}|$. Hence, $|\mathcal{X}(t)|\leq L\sum_{k=1}^{m}|x(t-\tau_{k}(t))-x(t-h_{k})|\leq KL\sum_{k=1}^{m}|\tau_{k}(t)-h_{k}|$.
Thus, if $\lim_{t\to\infty}\tau_{k}(t)=h_{k}$, then for any divergent sequence $\{t_{n}\}_{n=1}^{\infty}$ and $t\geq0$,
$\lim_{n\to\infty} \mathcal{X}(t_{n}+t)=0$. By definition, (\ref{limiting:sys1}) is a limiting delay system of (\ref{eq:sys1}) relative to $\mathcal{C}$.
\end{remark}

\begin{example}
Consider the time-delay system given by (\ref{scalar}) where $\tau(t)=h|\sin(\pi/2+\pi/(1+|t|))|$, $t\in\mathbb{R}$. Clearly $\tau(\cdot)$ is continuous but not differentiable for all $t\in\mathbb{R}$. We claim that
\begin{eqnarray}\label{hr}
\dot{z}(t)=-z(t)+z(t-h)
\end{eqnarray} is a limiting delay system of (\ref{scalar}) relative to $\mathcal{C}$. To see this, note that $\lim_{t\to\infty}\tau(t)=h$. Now it follows from Remark~\ref{SWLS} that (\ref{hr}) is a limiting delay system of (\ref{scalar}) relative to $\mathcal{C}$. 
\end{example}

Next, motivated by \cite{HHB:TAC:2009}, we present a Lyapunov-type result for semistability of nonlinear systems with constant time delays using Lyapunov-Krasovskii-type functionals. This result will help us determine the semistability of (\ref{limiting}) which is required by Theorem \ref{thm1}.

\begin{theorem}\label{LKF}
Consider the dynamical system (\ref{limiting}). Assume the solutions of (\ref{limiting}) are bounded and there exists a continuous functional $V:\mathcal{C}\to\mathbb{R}$ such that $\dot{V}$ is defined on $\mathcal{C}$ and $\dot{V}(\phi)\leq 0$ for all $\phi\in\mathcal{C}$. If every point in the largest invariant set $\mathcal{M}$ of $\dot{V}^{-1}(0):=\{x\in\mathcal{C}:\dot{V}(x)=0\}$ is a Lyapunov stable equilibrium solution of (\ref{limiting}) relative to $\mathcal{C}$, then (\ref{limiting}) is semistable relative to $\mathcal{C}$.
\end{theorem}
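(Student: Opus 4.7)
The plan is to deduce convergence of every trajectory of (\ref{limiting}) from a LaSalle-type invariance principle for functional differential equations, and then upgrade this pointwise convergence to semistability by combining the hypothesis that every element of $\mathcal{M}$ is Lyapunov stable with Lemma~\ref{omega}.

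First, I would invoke the invariance principle for retarded functional differential equations (Theorem 3.2 of Chapter 5 of \cite{HV:1993}). Since the trajectories of (\ref{limiting}) are bounded, Lemma 1.4 of Chapter 4 of \cite{HV:1993} gives that the positive orbit of every $\phi\in\mathcal{C}$ is precompact in $\mathcal{C}$, so its $\mathcal{C}$-valued positive limit set, call it $\Lambda(\phi)$, is nonempty, compact, and invariant under (\ref{limiting}). Because $V$ is continuous and $\dot{V}\leq 0$, the map $t\mapsto V(z_{t}(\phi))$ is monotonically nonincreasing and bounded below (by $\inf V$ on the compact $\Lambda(\phi)$), so it tends to a constant $c$; continuity of $V$ then forces $V\equiv c$ on $\Lambda(\phi)$, whence $\Lambda(\phi)\subseteq\dot{V}^{-1}(0)$. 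Invariance then upgrades this to $\Lambda(\phi)\subseteq\mathcal{M}$, so every $\psi\in\Lambda(\phi)$ is a Lyapunov stable equilibrium of (\ref{limiting}) and in particular is a constant function $\psi(s)\equiv z$ with $z\in\mathcal{E}$.

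Next I would translate this to the $\mathbb{R}^{n}$-valued positive limit set $\omega(\phi)$ used in Lemma~\ref{omega}: evaluation of elements of $\Lambda(\phi)$ at $s=0$ produces exactly $\omega(\phi)$, so $\omega(\phi)\subseteq\mathcal{E}$ and every $z\in\omega(\phi)$ is Lyapunov stable as an equilibrium of (\ref{limiting}). Since (\ref{limiting}) has the same structural form as (\ref{DS}), Lemma~\ref{omega} applies and picking any $z\in\omega(\phi)$ yields $\lim_{t\to\infty}z(t)=z$ with $\omega(\phi)=\{z\}$, so every trajectory of (\ref{limiting}) converges to an equilibrium. To then conclude semistability, fix an arbitrary $z\in\mathcal{E}$. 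The constant function $\tilde{\phi}\equiv z$ is an equilibrium of (\ref{limiting}); along the stationary trajectory $V$ is constant, so $\dot{V}(\tilde{\phi})=0$, and $\{\tilde{\phi}\}$ is trivially invariant, giving $\tilde{\phi}\in\mathcal{M}$, which is Lyapunov stable by hypothesis. For any open neighborhood $\mathcal{U}\subseteq\mathcal{C}$ of $\tilde{\phi}$: every initial condition in $\mathcal{U}$ produces a convergent trajectory by the previous step, and any equilibrium in $\mathcal{U}$ also corresponds to a point of $\mathcal{E}\subseteq\mathcal{M}$ and is therefore Lyapunov stable. By definition, $\tilde{\phi}$ is semistable, and since $z\in\mathcal{E}$ was arbitrary, (\ref{limiting}) is semistable.

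The main obstacle, and the step requiring the most care, is the first one: setting up the $\mathcal{C}$-valued invariance machinery with enough rigor that the conclusion $\Lambda(\phi)\subseteq\mathcal{M}$ is clean, because Lemma~\ref{omega} and the definition of semistability live naturally in $\mathbb{R}^{n}$ rather than in $\mathcal{C}$. In particular one must make sure that $\dot{V}$, as a functional on $\mathcal{C}$, genuinely represents $\frac{d}{dt}V(z_{t})$ along solutions so that the monotone-convergence argument yielding constancy of $V$ on $\Lambda(\phi)$ is valid, and that the evaluation map $\psi\mapsto\psi(0)$ cleanly intertwines the $\mathcal{C}$-valued and $\mathbb{R}^{n}$-valued notions of limit set. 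Once this translation is in place, the remaining deductions are routine applications of Lemma~\ref{omega} and the hypothesis on $\mathcal{M}$.
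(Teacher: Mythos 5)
Your proposal is correct and follows essentially the same route as the paper's proof: boundedness plus the LaSalle invariance principle for functional differential equations places the positive limit set inside $\mathcal{M}$, Lemma~\ref{omega} then yields convergence to a single Lyapunov stable equilibrium, and semistability follows. You simply spell out the details (precompactness, constancy of $V$ on the limit set, the passage from the $\mathcal{C}$-valued to the $\mathbb{R}^{n}$-valued limit set, and the observation that every equilibrium lies in $\mathcal{M}$) that the paper leaves implicit.
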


\begin{proof}
Since every solution is bounded, it follows from
the hypotheses on $V$ that, for every $\phi\in\mathcal{C}$, the positive limit set of (\ref{limiting}) denoted by $\varpi(\phi)$ is nonempty
and contained in the largest invariant subset $\mathcal{M}$ of $\dot{V}^{-1}(0)$.
Since every point in $\mathcal{M}$ is a Lyapunov stable equilibrium solution
of (\ref{limiting}), it follows from Lemma~\ref{omega} that $\varpi(\phi)$
contains a single point for every $\phi\in\mathcal{C}$ and the solutions of (\ref{limiting}) converge.
Since $\Omega(\phi)$ is Lyapunov stable for every $\phi\in\mathcal{C}$,
semistability follows.
\end{proof}

\begin{example}\label{LKF_eg1}
Consider the time-delay system given by (\ref{hr}). Let $V(z_{t})=z^{2}(t)+\int_{t-h}^{t}z^{2}(s){\rm{d}}s$. Then the derivative of $V(\cdot)$ along the solutions of (\ref{hr}) is given by $\dot{V}(z_{t})=2z(t)\dot{z}(t)+z^{2}(t)-z^{2}(t-h)=-(-z(t)+z(t-h))^{2}\leq 0$, $t\geq 0$. Note that $\dot{V}^{-1}(0)=\{\phi(\cdot)\in\mathcal{C}:-\phi(0)+\phi(-h)=0\}$. Furthermore, the largest invariant set contained in $\dot{V}^{-1}(0)$ is given by $\mathcal{M}=\{z_{t}\in\mathcal{C}:z(t)=\alpha\in\mathbb{R},t\in[-h,\infty)\}$. Now, it follows from Example~\ref{eg_scalar1} and Theorem~\ref{LKF} that (\ref{hr}) is semistable relative to $\mathcal{C}$, and hence, by Example~\ref{eg_scalar1} and Theorem~\ref{thm1}, (\ref{scalar}) with $\tau(t)=h|\sin(\pi/2+\pi/(1+|t|))|$ is semistable relative to $\mathcal{C}$. 
\end{example}

As an alternative to Theorem \ref{LKF}, we present a Lyapunov-Razumikhin function approach to semistability analysis of nonlinear systems with constant time delays. Motivated by \cite{HT:JDE:1983}, this result gives a different method to prove semistability of (\ref{limiting}) other than Theorem~\ref{LKF}, which is useful for many cases in that constructing a Lyapunov-Krasovskii functional for (\ref{limiting}) may not be an easy task in these cases.

\begin{theorem}\label{IP_Razu}
Consider the dynamical system (\ref{limiting}). Assume the solutions of (\ref{limiting}) are bounded and there exists a continuous function $V:\mathbb{R}^{n}\to\mathbb{R}$ such that $\dot{V}$ is defined on $\mathbb{R}^{n}$ and $\dot{V}(\phi(0))\leq 0$ for all $\phi\in\mathcal{C}$ such that $V(\phi(0))=\max_{-h\leq s\leq 0}V(\phi(s))$. If every point in the largest invariant set $\mathcal{M}$ of $\mathcal{R}:=\{\phi\in\mathcal{C}:\max_{s\in[-h,0]}V(z_{t}(\phi)(s))=\max_{s\in[-h,0]}V(\phi(s)),\forall t\geq 0\}$ is a Lyapunov stable equilibrium solution of (\ref{limiting}) relative to $\mathcal{C}$, then (\ref{limiting}) is semistable relative to $\mathcal{C}$.
\end{theorem}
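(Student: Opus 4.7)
The plan is to follow the classical Razumikhin/LaSalle-type blueprint and then feed its output into the semistability machinery already established by Lemma~\ref{omega} and the argument pattern of Theorem~\ref{LKF}. First I would introduce the auxiliary scalar $W(t) := \max_{s \in [-h,0]} V(z_t(\phi)(s))$ and prove that $W$ is non-increasing along every trajectory of (\ref{limiting}). The argument is the classical Razumikhin one: at times $t$ where $V(z(t)) < W(t)$, continuity of $V\circ z$ prevents $W$ from instantaneously rising; at times where $V(z(t)) = W(t)$, the history $\phi := z_t$ satisfies $V(\phi(0)) = \max_{s} V(\phi(s))$, so the standing hypothesis yields $\dot{V}(\phi) \leq 0$, blocking $V(z(\cdot))$ from increasing at $t$ and hence keeping $W$ monotone. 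I would cite the standard execution in Chapter~5 of \cite{HV:1993} rather than rederive it.

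Since trajectories are bounded and $V$ is continuous, $W$ is bounded below; combined with monotonicity, $W(t) \to c$ as $t \to \infty$ for some constant $c = c(\phi)$. The decisive observation is that this limit value is attained uniformly on every limit history. Boundedness plus the precompactness results of \cite{HV:1993} make the positive orbit $\{z_t(\phi) : t \geq 0\}$ precompact in $\mathcal{C}$, so for any sequence $t_n \to \infty$ with $z_{t_n}(\phi) \to \phi^{*}$ in $\mathcal{C}$, continuity of $V$ gives $\max_{s} V(\phi^{*}(s)) = \lim_n W(t_n) = c$. Continuous dependence of solutions on initial data then yields $z_{t+t_n}(\phi) \to z_t(\phi^{*})$ in $\mathcal{C}$ for every fixed $t \geq 0$, so $\max_{s} V(z_t(\phi^{*})(s)) = \lim_n W(t+t_n) = c$ as well. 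Both sides being equal to $c$ means $\phi^{*} \in \mathcal{R}$. Invariance of the $\mathcal{C}$-valued positive limit set under the semigroup of (\ref{limiting}) is standard for delay equations, so this limit set is actually contained in the largest invariant subset $\mathcal{M}$ of $\mathcal{R}$.

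By hypothesis every $\phi^{*} \in \mathcal{M}$ is a Lyapunov stable equilibrium of (\ref{limiting}), i.e.\ a constant function whose value $z_{*} := \phi^{*}(0)$ is a Lyapunov stable point of $\mathcal{E}$. In particular $z_{*} \in \omega(\phi)$, so Lemma~\ref{omega} collapses $\omega(\phi)$ to $\{z_{*}\}$ and forces $z(t) \to z_{*}$. Running the same argument with $\phi$ the constant function at an arbitrary element of $\mathcal{E}$ also shows that every equilibrium is Lyapunov stable, so semistability of (\ref{limiting}) follows directly from the definition. The principal difficulty is Step~1: proving $W(t)$ non-increasing when $V$ is merely continuous and $\dot{V}$ is only postulated to exist on $\mathcal{C}$. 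The nonsmoothness of $\max$ is what forces reliance on the classical Razumikhin contradiction argument rather than a direct differentiation of $W$; every other step is essentially bookkeeping in the previously established continuity and invariance framework.
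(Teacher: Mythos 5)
Your proposal is correct and follows essentially the same route as the paper's proof: establish that $\max_{s\in[-h,0]}V(z_t(\phi)(s))$ is nonincreasing via the classical Razumikhin argument, conclude that the positive limit set lies in the largest invariant subset $\mathcal{M}$ of $\mathcal{R}$, and then invoke Lemma~\ref{omega} to get convergence and hence semistability. Your version merely fills in, more explicitly than the paper does, why every limit history belongs to $\mathcal{R}$ (via precompactness and continuous dependence) and why every equilibrium lies in $\mathcal{M}$ and is therefore Lyapunov stable.
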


\begin{proof}
Let $\phi\in\mathcal{C}$ be such that $z_{t}(\phi)$ is bounded on $[-h,\infty)$. Then $\varpi(\phi)$ is nonempty. Using a standard Razumikhin-type argument (see the proof of Theorem 4.1 in \cite[p.~152]{HV:1993}) and the assumptions on $V$, it follows that the function $\max_{-h\leq s\leq 0}V(z_{t}(\phi)(s))$ is a nonincreasing function of $t$ on $[0,\infty)$. Since $V$ is bounded from below along this solution, $\lim_{t\to\infty}\{\max_{-h\leq s\leq 0}V(z_{t}(\phi)(s))\}$ exists. Hence, $\varpi(\phi)\subseteq\mathcal{M}\subseteq\mathcal{R}$ and $z_{t}(\phi)\to\mathcal{M}$ as $t\to\infty$. Finally, since every point in $\mathcal{M}$ is a Lyapunov stable equilibrium solution of (\ref{limiting}) relative to $\mathcal{C}$, it follows from Lemma~\ref{omega} that the solutions of (\ref{limiting}) converge. Thus, by definition, (\ref{limiting}) is semistable relative to $\mathcal{C}$.
\end{proof}

\begin{example}
Consider the scalar time-delay system given by
\begin{eqnarray}\label{neutral}
\dot{x}(t)=-mx(t)+\sum_{k=1}^{m}x(t-\tau_{k}(t)),
\end{eqnarray} where $x(t)\in\mathbb{R}$ and $\tau_{k}(t)=(hk/m)|\sin(\pi/2+\pi/(1+|t|))|$ for every $k=1,\ldots,m$, $t\in\mathbb{R}$. Using the Lyapunov-Razumikhin function $V(x-\alpha)=(x-\alpha)^{2}/2$ and similar arguments as in \cite[p.~154]{HV:1993}, it follows that (\ref{neutral}) is Lyapunov stable relative to $\mathcal{C}$. Next, note that
\begin{eqnarray}\label{neutral_limiting}
\dot{z}(t)=-mz(t)+\sum_{k=1}^{m}z(t-hk/m)
\end{eqnarray} is a limiting delay system of (\ref{neutral}) relative to $\mathcal{C}$. We show that (\ref{neutral_limiting}) is semistable relative to $\mathcal{C}$. To see this, note that for $V(z-\alpha)=(z-\alpha)^{2}/2$, $\alpha\in\mathbb{R}$, we have
$\dot{V}(z(t)-\alpha)=-m[z(t)-\alpha]^{2}+\sum_{k=1}^{m}[z(t)-\alpha][z(t-hk/m)-\alpha]\leq-m[z(t)-\alpha]^{2}+\sum_{k=1}^{m}|z(t)-\alpha||z(t-hk/m)-\alpha|\leq-m[z(t)-\alpha]^{2}+\sum_{k=1}^{m}[z(t)-\alpha]^{2}=0$
if $|z(t+\theta)-\alpha|\leq|z(t)-\alpha|$ for $\theta\in[-h,0]$. Hence, it follows from Theorem 4.1 of \cite[p.~152]{HV:1993} that (\ref{neutral_limiting}) is Lyapunov stable relative to $\mathcal{C}$. In particular, note that $V(z)=z^{2}/2$ and $\dot{V}(z(t))=-mz^{2}(t)+\sum_{k=1}^{m}z(t)z(t-hk/m)\leq0$ if $|z(t+\theta)|\leq|z(t)|$ for $\theta\in[-h,0]$. Next, we want to compute $\mathcal{R}$ and $\mathcal{M}$. First of all, note that $\mathcal{M}$ is nonempty since $0\in\mathcal{M}$. Let $\phi\in\mathcal{R}$, that is, let $\phi\in\mathcal{C}$ be such that
$\max_{-h\leq\theta\leq0}|z_{t}(\phi)(\theta)|=\max_{-h\leq\theta\leq0}|\phi(\theta)|$ for all $t\geq 0$. For $\phi\in\mathcal{R}$ satisfying $|\phi(0)|\geq|\phi(s)|$, $s\in[-h,0]$, there exists $t^{*}>0$ for which $\dot{V}(z_{t^{*}}(\phi))=0$ as $V$ attains a relative maximum for such $t^{*}$. For such a $t^{*}$, $-mz^{2}(t^{*})+\sum_{k=1}^{m}z(t^{*})z(t^{*}-hk/m)=0$, and hence, $z(t^{*})=0$ or $-mz(t^{*})+\sum_{k=1}^{m}z(t^{*}-hk/m)=0$. Consider the case where $\phi(0)\geq0$. Then it follows that $z(t^{*}+\theta)\leq z(t^{*})$ for all $\theta\in[-h,0]$. Therefore, $z(t^{*})=0$ or $z(t^{*})=z(t^{*}-hk/m)$ for every $k=1,\ldots,m$. Due to uniqueness of solutions, we have $z_{t}(\phi)=0$ or $z_{t}(\phi)=z_{t}(\phi)(-kh/m)$ for every $k=1,\ldots,m$ and all $t\geq t^{*}$. By Example~\ref{LKF_eg1}, it follows that $\mathcal{M}=\{z_{t}\in\mathcal{C}:z(t)=\alpha\in\mathbb{R},t\in[-h,\infty)\}$. Now, it follows from Theorem~\ref{IP_Razu} that (\ref{neutral_limiting}) is semistable relative to $\mathcal{C}$. Finally, it follows from Theorem~\ref{thm1} that (\ref{neutral}) is semistable relative to $\mathcal{C}$. 
\end{example}

\subsection{Specialization to the Consensus Problem with Autonomous Self-Regulating Time Lags}

In this subsection, we discuss the delay effect on the consensus problem (\ref{eq:sys1}) with autonomous self-regulating time lags. In particular, we show that under certain mild conditions, (\ref{eq:sys1}) is a semistable system, implying that it has no periodic solutions in $\mathcal{C}$. This contribution advances the study of delay effects on the solution of differential equations with autonomous self-regulating time lags and complements the relevant results in the literature \cite{BC:PNAS:1959,Cooke:JDE:1970,Rek:DAN:1958,Ginzburg:DU:1970,Ruiz:JDE:1976,Stephan:JDE:1969,Stephan:JDE:1970}.

\begin{lemma}\label{convergence}
Consider the dynamical system (\ref{eq:sys1}). Assume the solutions of (\ref{eq:sys1}) are bounded. If $\lim_{t\to\infty}\tau_{k}(t)=h_{k}$ for every $k=1,\ldots,m$, then
\begin{eqnarray}\label{limiting:sys1}
\dot{z}(t)=Ez(t)+\sum_{k=1}^{m}F_{k}z(t-h_{k})
\end{eqnarray}
is a limiting delay system of (\ref{eq:sys1}) relative to $\mathcal{C}$.
\end{lemma}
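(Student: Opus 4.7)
The plan is to observe that Lemma~\ref{convergence} is essentially a specialization of Remark~\ref{SWLS} to the linear case. In the notation of that remark, set $f(x) := Ex$ and, for $k=1,\ldots,m$, set $g_{k}(x) := F_{k}x$, so that $g(x(t-\tau_{1}(t)),\ldots,x(t-\tau_{m}(t)))=\sum_{k=1}^{m}g_{k}(x(t-\tau_{k}(t)))$. Then $\|f(x)\|\leq\|E\|\,\|x\|$, so the bound $\beta(r):=\|E\|\,r$ is a class $\mathcal{K}$ function, and each $g_{k}$ is globally Lipschitz with constant $L_{k}:=\|F_{k}\|$. Every hypothesis of Remark~\ref{SWLS} is therefore satisfied, and the conclusion follows.

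For completeness, I would write out the elementary estimate directly rather than quoting the remark. First, by boundedness of trajectories there exists $M>0$ such that $\|x(t)\|\leq M$ for every $t\geq -h$. Substituting into (\ref{eq:sys1}) and using the triangle inequality yields $\|\dot{x}(t)\|\leq K$ for all $t\geq 0$, where $K := \bigl(\|E\|+\sum_{k=1}^{m}\|F_{k}\|\bigr)M$. Next, applying the fundamental theorem of calculus componentwise gives
\begin{eqnarray*}
x(t-\tau_{k}(t))-x(t-h_{k}) = \int_{t-h_{k}}^{t-\tau_{k}(t)}\dot{x}(s)\,ds,
\end{eqnarray*}
so that $\|x(t-\tau_{k}(t))-x(t-h_{k})\|\leq K\,|\tau_{k}(t)-h_{k}|$ for each $k$.

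The final step is to assemble these bounds into a bound on $\mathcal{X}(t)$. Since $\mathcal{X}(t)=\sum_{k=1}^{m}F_{k}[x(t-\tau_{k}(t))-x(t-h_{k})]$, the triangle inequality yields
\begin{eqnarray*}
\|\mathcal{X}(t)\|\leq K\sum_{k=1}^{m}\|F_{k}\|\,|\tau_{k}(t)-h_{k}|.
\end{eqnarray*}
Under the hypothesis $\lim_{t\to\infty}\tau_{k}(t)=h_{k}$ for every $k$, each term on the right tends to zero, and hence $\lim_{t\to\infty}\mathcal{X}(t)=0$. By Definition~\ref{def}, (\ref{limiting:sys1}) is a limiting system of (\ref{eq:sys1}).

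There is essentially no genuine obstacle in this argument; the only delicate point is making sure the a priori bound on $\|\dot{x}(t)\|$ really holds for \emph{all} $t\geq 0$ (not just $t\geq h$), which is why the boundedness hypothesis $\|x(t)\|\leq M$ must be applied on the extended interval $[-h,\infty)$. The linearity of $f$ and $g_{k}$ makes both the class-$\mathcal{K}$ bound on $f$ and the global Lipschitz property of each $g_{k}$ automatic, so no further structural hypotheses (such as continuous differentiability of $\tau_{k}$) are required.
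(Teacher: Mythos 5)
Your proof is correct and follows essentially the same route as the paper's: both invoke Remark~\ref{SWLS} and then write out the same a priori bound $\|\dot{x}(t)\|\leq K$, the integral identity for $x(t-\tau_{k}(t))-x(t-h_{k})$, and the resulting estimate forcing $\mathcal{X}(t)\to 0$. Your version is slightly more explicit in the final assembly of the bound on $\|\mathcal{X}(t)\|$, but there is no substantive difference.
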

\begin{proof}
The proof is essentially given by Remark \ref{SWLS}, and hence, is omitted here.
\end{proof}

Next, we present a Lyapunov stability result for  (\ref{eq:sys1}). Define $F:=\sum_{k=1}^{m}F_{k}$. For a matrix $A\in\mathbb{R}^{m\times n}$, we use $A_{(i,j)}$ to denote the $(i,j)$th element of $A$.

\begin{lemma}\label{LSTV}
Consider the dynamical system (\ref{eq:sys1}) having the following structure: given $a_{ij}\geq 0$, $i,j=1,\ldots,n$, all the elements in $F_{k}$ are nonnegative, $k=1,\ldots,m$, $F:=\sum_{k=1}^{m}F_{k}$, 
\begin{eqnarray}
E_{(i,j)}=\left\{\begin{array}{ll}
-\sum_{k=1}^{n}a_{ki}, & i=j,\\
0, & i\neq j,
\end{array}\right.,\quad F_{(i,j)}=\left\{\begin{array}{ll}
0, & i=j,\\
a_{ij}, & i\neq j,
\end{array}\right.\label{Fij}
\end{eqnarray} $i,j=1,\ldots,n$. Then (\ref{eq:sys1}) is Lyapunov stable relative to $\mathcal{C}$.
\end{lemma}

\begin{proof}
Note that under the assumptions in Lemma~\ref{LSTV}, (\ref{eq:sys1}) can be rewritten as
\begin{eqnarray}\label{component}
\dot{z}_{i}(t)&=&-\sum_{k=1}^{m}\sum_{j=1}^{n}F_{k(i,j)}z_{i}(t)+\sum_{k=1}^{m}\sum_{j=1}^{n}F_{k(i,j)}z_{j}(t-\tau_{k}(t))\nonumber\\
&=&\sum_{k=1}^{m}\sum_{j=1}^{n}F_{k(i,j)}(z_{j}(t-\tau_{k}(t))-z_{i}(t)),
\end{eqnarray} where $F_{k(i,j)}\geq 0$ for all $k=1,\ldots,m$ and $i,j=1,\ldots,n$.

Consider the Lyapunov-Razumikhin function given by
$V(x)=\frac{1}{2}\max_{1\leq i\leq n}\{(x_{i}-\alpha)^{2}\}$,
where $\alpha\in\mathbb{R}$. To use Razumikhin theorems (see \cite[p.~151]{HV:1993}) showing Lyapunov stability, we focus on
$V(\phi(0))=\max_{-h\leq s\leq 0}V(\phi(s))$, that is, for the cases in which
\begin{eqnarray}\label{phiI}
(\phi_{I}(0)-\alpha)^{2}\geq(\phi_{j}(s)-\alpha)^{2}, \quad s\in[-h,0],
\end{eqnarray}
where $I$ is the index for which $|\phi_{I}(0)-\alpha|=\max_{1\leq i\leq n}|\phi_{i}(0)-\alpha|$.

The derivative of $V$ along the solutions of (\ref{eq:sys1}) is given by $\dot{V}(\phi(0))=(\phi_{I}(0)-\alpha)\dot{\phi}_{I}(0)$. First suppose $\phi_{I}(0)-\alpha=c>0$. Then it follows from (\ref{phiI}) that $-c\leq\phi_{j}(s)-\alpha\leq c$ for all $j=1,\ldots,n$ and $s\in[-h,0]$. This implies that $\phi_{j}(s)-\phi_{I}(0)\leq 0$ for all $j=1,\ldots,n$ and $s\in[-h,0]$. Hence, by (\ref{component}), it follows that $\dot{\phi}_{I}(0)\leq 0$. Similarly, one can show that if $\phi_{I}(0)-\alpha\leq 0$, then $\dot{\phi}_{I}(0)\geq 0$. In summary, $\dot{V}(\phi(0))\leq 0$. Now it follows from Theorem 4.1 of \cite[p.~152]{HV:1993} that (\ref{eq:sys1}) is Lyapunov stable relative to $\mathcal{C}$.
\end{proof}

The following corollary regarding semistability of time-varying delay network consensus protocols given by (\ref{eq:sys1}) follows directly from Lemmas \ref{convergence} and \ref{LSTV}, and Theorems \ref{thm1} and \ref{coro1b}. To state this result, define $\textbf{1}=[1,\ldots,1]^{\rm{T}}\in\mathbb{R}^{n}$.

\begin{corollary}\label{coro1a}
Consider the dynamical system (\ref{eq:sys1}) having the structure given by (\ref{Fij}).
Assume $\lim_{t\to\infty}\tau_{k}(t)=h_{k}$ for every $k=1,\ldots,m$. Furthermore, assume that $(E+F)^{\rm{T}}\textbf{1}=(E+F)\textbf{1}=0$ and rank$(E+F)=n-1$. Then for every $\alpha\in\mathbb{R}$, $\alpha\textbf{1}$ is a semistable equilibrium solution of (\ref{eq:sys1}) relative to $\mathcal{C}$. Furthermore, $x(t)\to\alpha^{*}\textbf{1}$ as $t\to\infty$, where
\begin{eqnarray}\label{lea}
\alpha^{*}=\frac{\textbf{1}^{\rm{T}}\phi(0)+\sum_{k=1}^{m}\int_{-h_{k}}^{0}\textbf{1}^{\rm{T}}F_{k}\phi(\theta){\rm{d}}\theta}{n+\sum_{k=1}^{m}h_{k}\textbf{1}^{\rm{T}}F_{k}\textbf{1}}.
\end{eqnarray}
\end{corollary}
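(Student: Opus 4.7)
The plan is to chain the three ingredients already developed in the paper: Lemma \ref{LSTV} gives Lyapunov stability (and hence boundedness of trajectories) for (\ref{eq:sys1}); Lemma \ref{convergence} then identifies (\ref{limiting:sys1}) as a limiting system of (\ref{eq:sys1}) because $\tau_k(t)\to h_k$; and the cited consensus result of \cite{CHHR:SCL:2008} ensures that the constant-delay system (\ref{limiting:sys1}), under the Laplacian structure $(E+F)^{\rm T}\mathbf{1}=(E+F)\mathbf{1}=0$ and $\mathrm{rank}(E+F)=n-1$, is semistable with every trajectory converging to some scalar multiple of $\mathbf{1}$. Plugging all of this into Theorem \ref{thm1} yields semistability of (\ref{eq:sys1}), and in particular $x(t)\to\alpha^{*}\mathbf{1}$ for some $\alpha^{*}\in\mathbb{R}$.

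To pin down the value of $\alpha^{*}$, I would exhibit a conserved quantity for the constant-delay limiting system (\ref{limiting:sys1}). Set
\begin{eqnarray*}
V(z_{t}):=\mathbf{1}^{\rm T}z(t)+\sum_{k=1}^{m}\int_{t-h_{k}}^{t}\mathbf{1}^{\rm T}F_{k}z(s)\,{\rm d}s.
\end{eqnarray*}
Differentiating along (\ref{limiting:sys1}) and using $\mathbf{1}^{\rm T}E=-\sum_{k}\mathbf{1}^{\rm T}F_{k}$, every term cancels, so $V$ is constant. Evaluating $V$ at $t=0$ on initial data $\phi$ and at $t\to\infty$ on the consensus value $\alpha^{*}\mathbf{1}$ (where the integrals become $h_{k}\alpha^{*}\mathbf{1}^{\rm T}F_{k}\mathbf{1}$) and equating yields exactly the formula (\ref{lea}).

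The main obstacle is that the formula (\ref{lea}) is written in terms of the initial datum $\phi$ of the time-varying delay system (\ref{eq:sys1}), while the conservation law above belongs to the constant-delay limiting system (\ref{limiting:sys1}). Differentiating the analogous functional along (\ref{eq:sys1}) leaves residual terms $\sum_{k}\mathbf{1}^{\rm T}F_{k}\bigl[x(t-\tau_{k}(t))-x(t-h_{k})\bigr]$ which vanish pointwise but are not automatically integrable. The clean way to close the argument is to bound $\|\dot{x}\|$ by some $K$ (using boundedness of $x$ together with the linear form of $f$ and $g$ as in Remark \ref{SWLS}), so that $\|x(t-\tau_{k}(t))-x(t-h_{k})\|\le K|\tau_{k}(t)-h_{k}|$, and then split the time axis into a transient part $[0,T]$ and a tail on which the trajectory already sits close to $\alpha^{*}\mathbf{1}$; on the tail the residual contributes only higher-order terms in $\epsilon$ and does not change the weighted average encoded by $V$. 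Equivalently, one may apply Lemma \ref{lemma3}: every element of $\omega(\phi)$ must equal the constant function $\alpha^{*}\mathbf{1}$, and the conserved value of $V$ on the limiting trajectory through $\alpha^{*}\mathbf{1}$ coincides with the value $V$ takes on $\phi$ after accounting for the boundary contribution from the transient. I expect this matching step---showing the residual $\int_{0}^{\infty}\sum_{k}\mathbf{1}^{\rm T}F_{k}[x(s-\tau_{k}(s))-x(s-h_{k})]\,{\rm d}s$ is controlled so that the formula evaluated on $\phi$ reproduces the actual consensus value---to be the technical heart of the proof.
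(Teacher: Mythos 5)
Your chain for the semistability claim---Lemma \ref{LSTV} for Lyapunov stability (hence boundedness of trajectories), Lemma \ref{convergence} for the limiting system (\ref{lshk}), Theorem 3.1 of \cite{CHHR:SCL:2008} for semistability of (\ref{lshk}), and Theorem \ref{thm1} to pull this back to (\ref{eq:sys1})---is exactly the paper's proof. So is, in substance, your derivation of (\ref{lea}) \emph{for the constant-delay system} via the conserved functional $V(z_t)=\textbf{1}^{\rm T}z(t)+\sum_{k=1}^m\int_{t-h_k}^t\textbf{1}^{\rm T}F_kz(s)\,{\rm d}s$; the paper simply cites Theorem 3.1 of \cite{CHHR:SCL:2008} for (\ref{lea}) and says nothing more.

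The difficulty you flag in your last paragraph---that (\ref{lea}) is written in terms of the initial datum $\phi$ of the time-varying system, while $V$ is conserved only along the limiting system---is therefore not addressed by the paper at all, and it is a genuine issue. But your proposed repair does not close it. Along (\ref{eq:sys1}) one has $\dot V(x_t)=\sum_k\textbf{1}^{\rm T}F_k[x(t-\tau_k(t))-x(t-h_k)]$, hence $\alpha^*\bigl(n+\sum_kh_k\textbf{1}^{\rm T}F_k\textbf{1}\bigr)=V(\phi)+\int_0^\infty\sum_k\textbf{1}^{\rm T}F_k[x(s-\tau_k(s))-x(s-h_k)]\,{\rm d}s$. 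That integral converges (because $V(x_t)$ does), but nothing forces it to vanish: splitting $[0,\infty)$ into transient plus tail shows only that the tail contribution is small, not that the transient contribution is zero, and Lemma \ref{lemma3} only identifies $\omega(\phi)$ with a constant function, which is already known. In fact (\ref{lea}) fails in general: in the example (\ref{2nd_1})--(\ref{2nd_2}), take $\tau_1=\tau_2\equiv0$ on $[0,T]$ with $T$ large and then let them rise continuously to $h_1,h_2$. On $[0,T]$ the dynamics are the undelayed consensus equations, which conserve $x_1+x_2$ and drive the state to $\tfrac12(\phi_1(0)+\phi_2(0))\textbf{1}$ while never seeing $\phi$ on $[-h,0)$; by Lyapunov stability the subsequent limit stays arbitrarily close to $\tfrac12(\phi_1(0)+\phi_2(0))$, whereas (\ref{lea}) predicts $\bigl(\phi_1(0)+\phi_2(0)+\int_{-h_1}^0\phi_2+\int_{-h_2}^0\phi_1\bigr)/(2+h_1+h_2)$, a different number for generic $\phi$. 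So what can actually be proved is $x(t)\to\alpha^*\textbf{1}$ with $\alpha^*=\bigl(n+\sum_kh_k\textbf{1}^{\rm T}F_k\textbf{1}\bigr)^{-1}\lim_{t\to\infty}V(x_t)$; the explicit formula (\ref{lea}) needs an extra hypothesis making the residual integral vanish, which is not available here. Your instinct that this matching step is the technical heart is correct---but the step cannot be completed as the corollary is stated, and the paper's own proof does not attempt it.
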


\begin{proof}
First, it follows from Lemma~\ref{convergence} that
\begin{eqnarray}\label{lshk}
\dot{x}(t)=Ex(t)+\sum_{k=1}^{m}F_{k}x(t-h_{k})
\end{eqnarray} is a limiting delay system of (\ref{eq:sys1}) relative to $\mathcal{C}$. Next, it follows from Theorem 3.1 of \cite{CHHR:SCL:2008} that (\ref{lshk}) is semistable relative to $\mathcal{C}$. Now, it follows from Lemma~\ref{LSTV} and Theorem~\ref{thm1} that (\ref{eq:sys1}) is semistable relative to $\mathcal{C}$. The expression (\ref{lea}) for $\alpha^{*}$ follows from (\ref{nea}).
\end{proof}

\begin{example}
Consider the time-delay system given by
\begin{eqnarray}
\dot{x}_{1}(t)&=&-x_{1}(t)+x_{2}(t-\tau_{1}(t)),\label{2nd_1}\\
\dot{x}_{2}(t)&=&-x_{2}(t)+x_{1}(t-\tau_{2}(t)),\label{2nd_2}
\end{eqnarray} where $\tau_{1}(t)=h_{1}|t\sin(1/t)|$ for $t\neq 0$ and $\tau_{1}(t)=h_{1}$ for $t=0$, and $\tau_{2}(t)=h_{2}(1-e^{-|t|})$. Clearly in this case,
$E=\Big[\begin{array}{cc}
-1 & 0 \\
0 & -1 \\
\end{array}\Big]$, $F_{1}=\Big[\begin{array}{cc}
0 & 1 \\
0 & 0 \\
\end{array}\Big]$, and $F_{2}=\Big[\begin{array}{cc}
0 & 0 \\
1 & 0 \\
\end{array}\Big]$. 
Now it follows from Corollary~\ref{coro1a} that the time-delay system given by (\ref{2nd_1}) and (\ref{2nd_2}) is semistable relative to $\mathcal{C}$. 
\end{example}

Next, we generalize Corollary \ref{coro1a} to the nonlinear system given by
\begin{eqnarray}\label{NNC}
\dot{x}(t)=f(x(t))+\sum_{k=1}^{m}g_{k}(x(t-\tau_{k}(t))),
\end{eqnarray} where $f(x)=[f_{1}(x_{1}),\ldots,f_{q}(x_{n})]^{\rm{T}}$. In particular, we present the following stability result for the nonlinear network consensus with time-varying delays given by the form of (\ref{NNC}). Recall that for a diagonal matrix $\Lambda\in\mathbb{R}^{n\times n}$, the \textit{Drazin inverse} $\Lambda^{\rm{D}}\in\mathbb{R}^{n\times n}$ is given by $\Lambda_{(i,i)}^{\rm{D}}=0$ if $\Lambda_{(i,i)}=0$ and $\Lambda_{(i,i)}^{\rm{D}}=1/\Lambda_{(i,i)}$ if $\Lambda_{(i,i)}\neq 0$, $i=1,\ldots,n$ \cite[p.~401]{Bernstein:2009}.

\begin{theorem}\label{coro1b}
Consider the dynamical system (\ref{NNC}) where $f(0)=0$, $g_{k}(0)=0$, $k=1,\ldots,m$, and $f_{i}(\cdot)$ is strictly decreasing for $f_{i}\not\equiv0$, $i=1,\ldots,n$. Assume (\ref{NNC}) is Lyapunov stable relative to $\mathcal{C}$ and $\lim_{t\to\infty}\tau_{k}(t)=h_{k}$ for every $k=1,\ldots,m$. Next, assume that $\textbf{1}^{\rm{T}}(f(x)+\sum_{k=1}^{m}g_{k}(x))=0$ for all $x\in\mathbb{R}^{n}$ and $f(x)+\sum_{k=1}^{m}g_{k}(x)=0$ if and only if $x=\alpha\textbf{1}$ for any $\alpha\in\mathbb{R}$. Furthermore, assume that there exist nonnegative diagonal matrices $P_{k}\in\mathbb{R}^{n\times n}$, $k=1,\ldots,m$, such that $P:=\sum_{k=1}^{m}P_{k}>0$, $P_{k}^{\rm{D}}P_{k}g_{k}(x)=g_{k}(x)$ for every $x\in\mathbb{R}^{n}$ and $k=1,\ldots,m$, 
$\sum_{k=1}^{m}[g_{k}(x)-g_{k}(\alpha\textbf{1})]^{\rm{T}}P_{k}[g_{k}(x)-g_{k}(\alpha\textbf{1})]\leq[f(x)-f(\alpha\textbf{1})]^{\rm{T}}P[f(x)-f(\alpha\textbf{1})]$ for any $x\in\mathbb{R}^{n}$ and $\alpha\in\mathbb{R}$, and 
$\sum_{k=1}^{m}[f(x)-f(\alpha\textbf{1})]^{\rm{T}}PP_{k}^{\rm{D}}P[f(x)-f(\alpha\textbf{1})]\leq [f(x)-f(\alpha\textbf{1})]^{\rm{T}}P[f(x)-f(\alpha\textbf{1})]$ for any $x\in\mathbb{R}^{n}$ and $\alpha\in\mathbb{R}$.
Then for every $\alpha\in\mathbb{R}$, $\alpha\textbf{1}$ is a semistable equilibrium solution of (\ref{NNC}) relative to $\mathcal{C}$. Furthermore, $x(t)\to\alpha^{*}\textbf{1}$ as $t\to\infty$, where $\alpha^{*}$ satisfies
\begin{eqnarray}\label{nea}
n\alpha^{*}+\sum_{k=1}^{m}h_{k}\textbf{1}^{\rm{T}}g_{k}(\alpha^{*}\textbf{1})=\textbf{1}^{\rm{T}}\phi(0)+\sum_{k=1}^{m}\int_{-h_{k}}^{0}\textbf{1}^{\rm{T}}g_{k}(\phi(\theta)){\rm{d}}\theta.
\end{eqnarray}
\end{theorem}

\begin{proof}
First, it follows from Lemma~\ref{convergence} that
\begin{eqnarray}\label{nshk}
\dot{x}(t)=f(x(t))+\sum_{k=1}^{m}g_{k}(x(t-h_{k}))
\end{eqnarray} is a limiting delay system of (\ref{NNC}) relative to $\mathcal{C}$. Next, it follows from the similar arguments as in the proof of Theorem 4.1 of \cite{CHHR:SCL:2008} that (\ref{nshk}) is semistable relative to $\mathcal{C}$. 
Now, it follows from Theorem~\ref{thm1} that (\ref{NNC}) is semistable relative to $\mathcal{C}$. The equation (\ref{nea}) follows from Theorem 4.1 of \cite{CHHR:SCL:2008}.
\end{proof}

\begin{remark}
Theorem~\ref{coro1b} requires Lyapunov stability of (\ref{NNC}) \textit{a priori} in order to test semistability of (\ref{NNC}). In general this requirement cannot be fulfilled by the conditions in Theorem~\ref{coro1b}. However, for some special cases, one can use Razumikhin theorems to prove Lyapunov stability of (\ref{NNC}). For instance, consider (\ref{NNC}) in which $f_{i}(x)=-(n-1)\sigma(x_{i})$, $i=1,\ldots,n$, $g_{k}(x)=\textbf{e}_{k}\sum_{i=1}^{n}\sigma(x_{i})$, $k=1,\ldots,n$, $\sigma(0)=0$, and $\sigma(\cdot)$ is strictly increasing, where $\textbf{e}_{k}\in\mathbb{R}^{n}$ denotes the elementary
vector of order $n$ with 1 in the $i$th component and 0's elsewhere. In this case, (\ref{NNC}) is Lyapunov stable relative to $\mathcal{C}$. The idea of the proof is illustrated by the following example. 
\end{remark}

\begin{example}
Consider the time-delay system given by
\begin{eqnarray}
\dot{x}_{1}(t)&=&-\sigma(x_{1}(t))+\sigma(x_{2}(t-\tau_{1}(t))),\label{2nd_n1}\\
\dot{x}_{2}(t)&=&-\sigma(x_{2}(t))+\sigma(x_{1}(t-\tau_{2}(t))),\label{2nd_n2}
\end{eqnarray} where $\sigma(x)=x+\tanh x$, $\tau_{1}(t)=h_{1}-h_{1}e^{-|t|}\sin t$, and $\tau_{2}(t)=h_{2}-h_{2}\sin(1/(1+|t|))$. In this case,
$f(x)=\Big[\begin{array}{c}
-\sigma(x_{1})\\
-\sigma(x_{2})\\
\end{array}\Big]$, $g_{1}(x)=\Big[\begin{array}{c}
\sigma(x_{2})\\
0\\
\end{array}\Big]$, $g_{2}(x)=\Big[\begin{array}{c}
0\\
\sigma(x_{1})\\
\end{array}\Big]$, $P_{1}=\Big[\begin{array}{cc}
1 & 0 \\
0 & 0 \\
\end{array}\Big]$, $P_{2}=\Big[\begin{array}{cc}
0 & 0 \\
0 & 1 \\
\end{array}\Big]$, and $P=\Big[\begin{array}{cc}
1 & 0 \\
0 & 1 \\
\end{array}\Big]$. To show Lyapunov stability of (\ref{2nd_n1}) and (\ref{2nd_n2}), consider the Lyapunov-Razumikhin function given by
$V(x)=\frac{1}{2}\max_{i=1,2}\{(x_{i}-\alpha)^{2}\}$,
where $\alpha\in\mathbb{R}$. To use Razumikhin theorems, we focus on
$V(\phi(0))=\max_{-h\leq s\leq 0}V(\phi(s))$, that is, for the cases in which
\begin{eqnarray}\label{phiIN}
(\phi_{I}(0)-\alpha)^{2}\geq(\phi_{j}(s)-\alpha)^{2}, \quad s\in[-h,0],\quad j=1,2,
\end{eqnarray}
where $I$ is the index for which $|\phi_{I}(0)-\alpha|=\max_{i=1,2}|\phi_{i}(0)-\alpha|$.

The derivative of $V$ along the solutions of (\ref{2nd_n1}) and (\ref{2nd_n2}) is given by $\dot{V}(\phi(0))=(\phi_{I}(0)-\alpha)\dot{\phi}_{I}(0)$. First suppose $\phi_{I}(0)-\alpha=c>0$. Then it follows from (\ref{phiIN}) that $-c\leq\phi_{j}(s)-\alpha\leq c$ for all $j=1,2$ and $s\in[-h,0]$. This implies that $\phi_{j}(s)-\phi_{I}(0)\leq 0$ for all $j=1,2$ and $s\in[-h,0]$. Hence, by (\ref{2nd_n1}) and (\ref{2nd_n2}), it follows that $\dot{\phi}_{I}(0)\leq 0$. Similarly, one can show that if $\phi_{I}(0)-\alpha\leq 0$, then $\dot{\phi}_{I}(0)\geq 0$. In summary, $\dot{V}(\phi(0))\leq 0$. Now it follows from Theorem 4.1 of \cite[p.~152]{HV:1993} that (\ref{2nd_n1}) and (\ref{2nd_n2}) is Lyapunov stable relative to $\mathcal{C}$. Now it follows from Theorem~\ref{coro1b} that the time-delay system given by (\ref{2nd_n1}) and (\ref{2nd_n2}) is semistable relative to $\mathcal{C}$. 
\end{example}

\section{Conclusion}

A new framework concerning semistability of nonlinear systems having nonisolated equilibria and bounded time-varying delays is presented and its applications to stability analysis of multiagent dynamic networks with consensus protocol in the presence of unknown heterogeneous time-varying delays are discussed in this report. Those time delays are not necessarily differentiable and known. We provided conditions, in terms of the limiting delay system, to guarantee semistability of nonlinear systems with multiple time-varying delays and applied those stability results to show that multiagent dynamic networks can still achieve consensus in the presence of heterogeneous, autonomous self-regulating time lags.

\bibliographystyle{IEEEtran}
\bibliography{Reference}

\begin{thebibliography}{10}
\providecommand{\url}[1]{#1}
\csname url@samestyle\endcsname
\providecommand{\newblock}{\relax}
\providecommand{\bibinfo}[2]{#2}
\providecommand{\BIBentrySTDinterwordspacing}{\spaceskip=0pt\relax}
\providecommand{\BIBentryALTinterwordstretchfactor}{4}
\providecommand{\BIBentryALTinterwordspacing}{\spaceskip=\fontdimen2\font plus
\BIBentryALTinterwordstretchfactor\fontdimen3\font minus
  \fontdimen4\font\relax}
\providecommand{\BIBforeignlanguage}[2]{{%
\expandafter\ifx\csname l@#1\endcsname\relax
\typeout{** WARNING: IEEEtran.bst: No hyphenation pattern has been}%
\typeout{** loaded for the language `#1'. Using the pattern for}%
\typeout{** the default language instead.}%
\else
\language=\csname l@#1\endcsname
\fi
#2}}
\providecommand{\BIBdecl}{\relax}
\BIBdecl

\bibitem{HV:1993}
J.~K. Hale and S.~M. {Verduyn Lunel}, \emph{Introduction to Functional
  Differential Equations}.\hskip 1em plus 0.5em minus 0.4em\relax New York:
  Springer-Verlag, 1993.

\bibitem{BT:CDC:2005}
P.-A. Bilman and G.~{Ferrari-Trecate}, ``Stability and convergence properties
  of dynamic average consensus estimators,'' in \emph{Proc. IEEE Conf. Decision
  Control}, Seville, Spain, 2005, pp. 7066--7071.

\bibitem{LJL:SCL:2008}
P.~Lin, Y.~Jia, and L.~Li, ``Distributed robust ${H}_{\infty}$ consensus
  control in directed networks of agents with time-delay,'' \emph{Syst. Control
  Lett.}, vol.~57, pp. 643--653, 2008.

\bibitem{SM:TAC:2004}
R.~Olfati-Saber and R.~M. Murray, ``Consensus problems in networks of agents
  with switching topology and time-delays,'' \emph{IEEE Trans. Autom. Control},
  vol.~49, pp. 1520--1533, 2004.

\bibitem{SWX:CDC:2006}
Y.~G. Sun, L.~Wang, and G.~Xie, ``Average consensus in directed networks of
  dynamic agents with time-varying communication delays,'' in \emph{Proc. IEEE
  Conf. Decision Control}, San Diego, CA, 2006, pp. 3393--3398.

\bibitem{CHHR:SCL:2008}
V.~Chellaboina, W.~M. Haddad, Q.~Hui, and J.~Ramakrishnan, ``On system state
  equipartitioning and semistability in network dynamical systems with
  arbitrary time-delays,'' \emph{Syst. Control Lett.}, vol.~57, pp. 670--679,
  2008.

\bibitem{PJ:CDC:2006}
A.~Papachristodoulou and A.~Jadbabaie, ``Synchronization in oscillator networks
  with heterogeneous delays, switching topologies and nonlinear dynamics,'' in
  \emph{Proc. IEEE Conf. Decision Control}, San Diego, CA, 2006, pp.
  4307--4312.

\bibitem{TL:TAC:2008}
Y.-P. Tian and C.-L. Liu, ``Consensus of multi-agent systems with diverse input
  and communication delays,'' \emph{IEEE Trans. Autom. Control}, vol.~53, pp.
  2122--2128, 2008.

\bibitem{XW:TAC:2008}
F.~Xiao and L.~Wang, ``Asynchronous consensus in continuous-time multi-agent
  systems with switching topology and time-varying delays,'' \emph{IEEE Trans.
  Autom. Control}, vol.~53, pp. 1804--1816, 2008.

\bibitem{Artstein:JDE:1977}
Z.~Artstein, ``The limiting equations of non-autonomous ordinary differential
  equations,'' \emph{J. Diff. Equat.}, vol.~25, pp. 184--202, 1977.

\bibitem{BC:PNAS:1959}
R.~Bellman and K.~L. Cooke, ``On the limit solutions of differential difference
  equations as the retardation approaches zero,'' \emph{Proc. Nat. Acad. Sci.},
  vol.~45, pp. 1026--1028, 1959.

\bibitem{Cooke:JDE:1970}
K.~L. Cooke, ``Linear functional differential equations of asymptotically
  autonomous type,'' \emph{J. Diff. Equat.}, vol.~7, pp. 154--174, 1970.

\bibitem{Rek:DAN:1958}
Z.~I. Rekhlitskii, ``Criteria for the boundedness of solutions of linear
  differential equations with variable retarded argument,'' \emph{Dokl. Akad.
  Nauk SSSR}, vol. 118, pp. 447--449, 1958.

\bibitem{Ginzburg:DU:1970}
R.~E. Ginzburg, ``Oscillations of linear systems with autonomous
  self-regulating lag,'' \emph{Differentialniye Uravnenija}, vol.~6, pp.
  1257--1264, 1970.

\bibitem{Ruiz:JDE:1976}
J.~{Ruiz-Clayessen}, ``Effect of delay on functional differential equations,''
  \emph{J. Diff. Equat.}, vol.~20, pp. 404--440, 1976.

\bibitem{Stephan:JDE:1969}
B.~H. Stephan, ``On the existence of periodic solutions of $z'(t)=-az(t-r+\mu
  k(t,z(t)))+{F}(t)$,'' \emph{J. Diff. Equat.}, vol.~6, pp. 408--419, 1969.

\bibitem{Stephan:JDE:1970}
------, ``Periodic solutions of differential equations with almost constant
  time lag,'' \emph{J. Diff. Equat.}, vol.~8, pp. 554--563, 1970.

\bibitem{BC:1963}
R.~Bellman and K.~L. Cooke, \emph{Differential-Difference Equations}.\hskip 1em
  plus 0.5em minus 0.4em\relax New York: Academic, 1963.

\bibitem{HCH:2010}
W.~M. Haddad, V.~Chellaboina, and Q.~Hui, \emph{Nonnegative and Compartmental
  Dynamical Systems}.\hskip 1em plus 0.5em minus 0.4em\relax Princeton, NJ:
  Princeton Univ. Press, 2010.

\bibitem{HCN:2005}
W.~M. Haddad, V.~Chellaboina, and S.~G. Nersesov, \emph{Thermodynamics: A
  Dynamical Systems Approach}.\hskip 1em plus 0.5em minus 0.4em\relax
  Princeton, NJ: Princeton Univ. Press, 2005.

\bibitem{VHM:ACC:2010}
L.~Vu, Q.~Hui, and K.~A. Morgansen, ``Semistability of linear systems with
  time-varying delays and application to multi-agent systems,'' preprint, 2010.

\bibitem{HHB:TAC:2009}
Q.~Hui, W.~M. Haddad, and S.~P. Bhat, ``Semistability, finite-time stability,
  differential inclusions, and discontinuous dynamical systems having a
  continuum of equilibria,'' \emph{IEEE Trans. Autom. Control}, vol.~54, pp.
  2465--2470, 2009.

\bibitem{HKTW:JDE:1994}
J.~R. Haddock, T.~Krisztin, J.~Terj\'{e}ki, and J.~H. Wu, ``An invariance
  principle of {L}yapunov-{R}azumikhin type for neutral functional differential
  equations,'' \emph{J. Diff. Equat.}, vol. 107, pp. 395--417, 1994.

\bibitem{MK:PM:1987}
A.~A. Martynyuk and A.~Karimzhanov, ``Limiting equations and stability of
  nonstationary motions,'' \emph{Prikl. Mekh.}, vol.~23, pp. 67--73, 1987.

\bibitem{LLC:TAC:2001}
T.-C. Lee, D.-C. Liaw, and B.-S. Chen, ``A general invariance principle for
  nonlinear time-varying systems and its applications,'' \emph{IEEE Trans.
  Autom. Control}, vol.~46, pp. 1989--1993, 2001.

\bibitem{LR:JDE:2003}
H.~Logemann and E.~P. Ryan, ``Non-autonomous systems: {A}symptotic behaviour
  and weak invariance principles,'' \emph{J. Diff. Equat.}, vol. 189, pp.
  440--460, 2003.

\bibitem{AP:MN:2000}
A.~S. Andreev and S.~V. Pavlikov, ``The method of {L}yapunov functionals in
  stability analysis of functional-differential equations,''
  \emph{Matematicheskie Zametki}, vol.~68, pp. 323--331, 2000.

\bibitem{Sedova:JMAA:2003}
N.~Sedova, ``On employment of semidefinite functions in stability of delayed
  equations,'' \emph{J. Math. Anal. Appl.}, vol. 281, pp. 307--319, 2003.

\bibitem{Evans:1998}
L.~C. Evans, \emph{Partial Differential Equations}.\hskip 1em plus 0.5em minus
  0.4em\relax Providence, RI: Amer. Math. Soc., 1998.

\bibitem{Clarke:1983}
F.~H. Clarke, \emph{Optimization and Nonsmooth Analysis}.\hskip 1em plus 0.5em
  minus 0.4em\relax New York: Wiley, 1983.

\bibitem{AC:1984}
J.~P. Aubin and A.~Cellina, \emph{Differential Inclusions}.\hskip 1em plus
  0.5em minus 0.4em\relax Berlin, Germany: Springer-Verlag, 1984.

\bibitem{HP:PAMS:1995}
S.~Hu and N.~S. Papageorgiou, ``Delay differential inclusions with
  constraints,'' \emph{Proc. Amer. Math. Soc.}, vol. 123, no.~7, pp.
  2141--2150, 1995.

\bibitem{Carothers:2000}
N.~L. Carothers, \emph{Real Analysis}.\hskip 1em plus 0.5em minus 0.4em\relax
  New York: Cambridge Univ. Press, 2000.

\bibitem{VP:JMAA:1982}
R.~B. Vinter and G.~Pappas, ``A maximum principle for non-smooth optimal
  control problems with state constraints,'' \emph{J. Math. Anal. Appl.},
  vol.~89, pp. 212--232, 1982.

\bibitem{Kra:1963}
N.~N. Krasovskii, \emph{Stability of Motion}.\hskip 1em plus 0.5em minus
  0.4em\relax Stanford, CA: Stanford Univ. Press, 1963.

\bibitem{Raz:PMM:1956}
B.~S. Razumikhin, ``On the stability of systems with a delay,'' \emph{Prikl.
  Mat. Mekh.}, vol.~20, pp. 500--512, 1956.

\bibitem{Raz:AT:1960}
------, ``Application of {L}iapunov's method to problems in the stability of
  systems with a delay,'' \emph{Automat. i Telemeh.}, vol.~21, pp. 740--749,
  1960.

\bibitem{HT:JDE:1983}
J.~R. Haddock and J.~Terj\'{e}ki, ``Liapunov-{R}azumikhin functions and an
  invariance principle for functional differential equations,'' \emph{J. Diff.
  Equat.}, vol.~48, pp. 95--122, 1983.

\bibitem{Bernstein:2009}
D.~S. Bernstein, \emph{Matrix Mathematics, \emph{2nd ed.}}\hskip 1em plus 0.5em
  minus 0.4em\relax Princeton, NJ: Princeton Univ. Press, 2009.

\end{thebibliography}

\end{document}